\newcommand{\Ubox}{\overline{\dim}_{\mathrm{B}}\:}
\renewcommand{\epsilon}{\varepsilon}
\renewcommand{\limsup}{\varlimsup}
\newtheorem{thm}{Theorem}
\newtheorem{lma}[thm]{Lemma}
\theoremstyle{definition}
\newtheorem*{rem*}{Remark}
\newtheorem*{note*}{Notation}
\begin{document}

\title[A fractal proof of the infinitude of primes]{A fractal proof of the infinitude of primes}

\author[K. Saito]{ Kota Saito }
\address{Kota Saito\\
Graduate School of Mathematics\\ Nagoya University\\ Furocho\\ Chikusa-ku\\ Nagoya\\ 464-8602\\ Japan }
\curraddr{}
\email{m17013b@math.nagoya-u.ac.jp}

\thanks{This paper will appear in \textit{Lithuanian Mathematical Journal}.}

\subjclass[2010]{Primary: 11A41; Secondary: 11K55. }

\keywords{the infinitude of primes, Euclid, box dimension, fractal dimension}
\maketitle

\begin{abstract}
 This short paper gives another proof of the infinitude of primes by using the upper box dimension, which is one of fractal dimensions. 
\end{abstract}
A prime number is a natural number greater than 1 whose divisors are only 1 and itself. These numbers have been fascinating the whole human race. Euclid gave the first result on prime numbers in around 300 B.C. He showed the infinitude of primes: 
\begin{thm}[Euclid]\label{main1}
There are infinitely many prime numbers.
\end{thm}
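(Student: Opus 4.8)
The plan is to produce a single bounded set $A\subseteq[0,1]$, constructed purely from the natural numbers, for which one can show $\Ubox A>0$ by elementary metric considerations, and then to prove that the hypothetical finiteness of the primes would force $\Ubox A=0$. The set I would take is
\[
A=\{0\}\cup\Bigl\{\tfrac1n:n\in\mathbb{N}\Bigr\}.
\]
First I would establish the lower bound $\Ubox A\ge\tfrac12$, a classical computation that uses nothing about primes. For $\delta>0$ set $m=\fint{1/\sqrt\delta}$; since $\tfrac1n-\tfrac1{n+1}=\tfrac1{n(n+1)}>\delta$ whenever $n(n+1)<1/\delta$, the points $1,\tfrac12,\dots,\tfrac1m$ are pairwise more than $\delta$ apart, so every cover of $A$ by sets of diameter at most $\delta$ must use at least $m$ of them. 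Hence $N_\delta(A)\ge m$, and letting $\delta\to0$ gives $\Ubox A\ge\tfrac12$. It is important that this step invokes only the spacing of the points $1/n$ and not the infinitude of primes, so the argument below is not circular.

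Now suppose, toward a contradiction, that there are only finitely many primes $p_1<\dots<p_k$. By the fundamental theorem of arithmetic every $n\in\mathbb{N}$ can be written $n=p_1^{a_1}\cdots p_k^{a_k}$ with integers $a_i\ge0$. I would use this to bound $N_\delta(A)$ from above for small $\delta$: the points $\tfrac1n<\delta$ all lie in the one interval $[0,\delta)$, whereas a point $\tfrac1n\ge\delta$ forces $n\le1/\delta$, hence $p_i^{a_i}\le1/\delta$ and $a_i\le\log(1/\delta)/\log2$ for every $i$. Therefore the number of such $n$, and so the number of further sets of diameter $\delta$ needed to cover them, is at most $\bigl(1+\log(1/\delta)/\log2\bigr)^{k}$, whence
\[
\Ubox A=\limsup_{\delta\to0^{+}}\frac{\log N_\delta(A)}{-\log\delta}\le\limsup_{\delta\to0^{+}}\frac{\log\!\bigl(1+(1+\log(1/\delta)/\log2)^{k}\bigr)}{\log(1/\delta)}=0,
\]
because the numerator grows only like $\log\log(1/\delta)$. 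This contradicts $\Ubox A\ge\tfrac12$, so there must be infinitely many primes.

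I do not expect any genuinely hard step; the entire content lies in playing the two estimates off against each other. The one place that needs care is precisely the bookkeeping about which facts each estimate uses: the positive lower bound on $\Ubox A$ must come solely from the geometry of the sequence $(1/n)$, while the assumption of finitely many primes is allowed to enter only through the crude count of exponent tuples $p_1^{a_1}\cdots p_k^{a_k}\le1/\delta$. I would also note that the value $\tfrac12$ plays no special role — any fixed positive lower bound for $\Ubox A$ would do — but it is convenient since it makes the first estimate completely explicit.
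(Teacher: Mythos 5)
Your proof is correct, but it is organized differently from the paper's. The paper isolates the prime-related content in a lemma on product sets, namely $\Ubox(CD)\le\Ubox C+\Ubox D$, writes $1/\mathbb{N}=A(p_1)\cdots A(p_n)$ with $A(p)=\{1/p^k:k\ge 0\}$, shows each $\Ubox A(p)=0$ by an explicit cover of size $O(\log(1/\delta))$, and then concludes $1/2\le\Ubox(1/\mathbb{N})\le\sum_i\Ubox A(p_i)=0$. You instead skip the product-set machinery entirely and bound $N_\delta$ directly: one set of diameter $\delta$ absorbs $\{0\}\cup\{1/n:n>1/\delta\}$, and the remaining points correspond to integers $n\le 1/\delta$, of which there are at most $\bigl(1+\log(1/\delta)/\log 2\bigr)^k$ by counting exponent tuples. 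This is essentially the same $(\log(1/\delta))^k$ estimate that falls out of the paper's lemma when you unwind it on the sets $A(p_i)$, but you reach it by arithmetic counting rather than by multiplying covers, so you never need the Lipschitz estimate $|c_1d_1-c_2d_2|\le\delta$ or the subadditivity lemma. What your route buys is economy --- the whole proof is self-contained and the only fractal-geometric input is the definition of $\Ubox$; what the paper's route buys is a reusable general lemma and a cleaner conceptual statement (``dimension is subadditive under products of sets''), which is the point of the note. Your lower-bound argument is the same as the paper's (pairwise separation of $1,1/2,\dots,1/m$ with $m\approx\delta^{-1/2}$); just be slightly more careful in the write-up that the separation of \emph{all} pairs among $1,\dots,1/m$ reduces to the smallest consecutive gap $\tfrac{1}{(m-1)m}>\delta$, as the paper does via $\tfrac1s-\tfrac1t\ge\tfrac1{st}$.
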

His proof is arithmetical and simple (see \cite[Book IX Proposition~20]{Heath}). Nowadays, one can find numerous different proofs of the infinitude of primes. Surprisingly, the number of them is at least 183 from \cite{Mestrovic}. This paper also gives another proof of the infinitude of primes. We mainly use properties of the upper box dimension and the fact that any natural number greater than 1 can be written as a product of prime numbers (the uniqueness of the factorization is not required). Our method is close to Euler's idea on the divergence of the sum of reciprocals of prime numbers \cite[pp.~172-174]{Euler} (alternatively see \cite[pp.~1-2]{Davenport}). From his proof, $\sum_{p\,:\,\text{prime}} 1/p=\infty$ is equivalent to $\sum_{n\in \mathbb{N}}1/n=\infty$, which implies that the size of the set of all positive integers should be small if there were only finitely many prime numbers. Similarly, we will find that the upper box dimension of the set of reciprocals of all positive integers should be small (zero) if there were only finitely many prime numbers. This leads to a contradiction.\\

Here we define {\it the upper box dimension}. Let $F \subset \mathbb{R}$ be a bounded set, and $\delta$ be a positive number. A family of sets $\{U_j\}_{j=1}^N$ is called a $\delta$-cover of $F$ if $F\subseteq \bigcup_{j=1}^N U_j$ and $d(U_j)\leq \delta$ for all $j=1,2,\ldots,N$, where $d(U_j)$ denotes the diameter of $U_j$ $i.e.$ we define $d(U)= \sup_{x,y\in U} |x-y|$ for every bounded set $U$. Then we define {\it the upper box dimension of} $F$ as
\[
	 \Ubox F= \limsup_{\delta\rightarrow+0}  \frac{\log N(F,\delta)}{-\log\delta},
\]
where $N(F,\delta)$ denotes the smallest cardinality of a $\delta$-cover of $F$ {\it i.e.} 
\[
	N(F,\delta)=\min\left\{N\in \mathbb{N}\: \colon\: \{U_{j}\}_{j=1}^N \text{ is a $\delta$-cover of $F$} \right\}.
\]
We refer \cite{Falconer,Robinson} to the readers who are interested in more details.

In order to prove Theorem~\ref{main1}, we show the following lemma:
\begin{lma}\label{main2}
Let $C,D\subset \mathbb{R}$ be bounded sets. We have
\[
    \Ubox (C D) \leq \Ubox C + \Ubox D,  
\]
where $CD=\{cd\in \mathbb{R}\: :\: c\in C,\ d\in D \}$. 
\end{lma}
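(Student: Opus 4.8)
The plan is to estimate the covering number $N(CD,\delta)$ in terms of $N(C,\cdot)$ and $N(D,\cdot)$. First I would dispose of trivial cases: if $C$ or $D$ is empty then $CD$ is empty and there is nothing to prove; so assume both are nonempty and bounded, say $C\subseteq[-R,R]$ and $D\subseteq[-R,R]$ for some $R\geq 1$. The heart of the matter is the elementary observation that if $U$ is a set of diameter at most $\eta$ contained in $[-R,R]$ and $V$ is a set of diameter at most $\eta$ contained in $[-R,R]$, then the product set $UV=\{uv:u\in U,v\in V\}$ has diameter at most roughly $2R\eta$ (since $|u_1v_1-u_2v_2|\leq|u_1||v_1-v_2|+|v_2||u_1-u_2|\leq R\eta+R\eta$). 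Hence if $\{U_j\}_{j=1}^{M}$ is an $\eta$-cover of $C$ and $\{V_k\}_{k=1}^{L}$ is an $\eta$-cover of $D$, then $\{U_jV_k\}_{j,k}$ is a $(2R\eta)$-cover of $CD$ consisting of $ML$ sets, so $N(CD,2R\eta)\leq N(C,\eta)\,N(D,\eta)$.

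Next I would turn this into a statement about $N(CD,\delta)$ directly: given small $\delta>0$, set $\eta=\delta/(2R)$, so that $N(CD,\delta)\leq N(C,\delta/(2R))\,N(D,\delta/(2R))$. Taking logarithms and dividing by $-\log\delta$ gives
\[
\frac{\log N(CD,\delta)}{-\log\delta}\leq \frac{\log N(C,\delta/(2R))}{-\log\delta}+\frac{\log N(D,\delta/(2R))}{-\log\delta}.
\]
Now I would rewrite $-\log\delta = -\log(\delta/(2R)) - \log(2R)$, and observe that as $\delta\to+0$ the quantity $\delta':=\delta/(2R)$ also tends to $+0$, while $-\log\delta/(-\log\delta') = 1 + \log(2R)/(-\log\delta')\to 1$. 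Therefore
\[
\frac{\log N(C,\delta/(2R))}{-\log\delta}=\frac{\log N(C,\delta')}{-\log\delta'}\cdot\frac{-\log\delta'}{-\log\delta},
\]
and taking $\limsup$ as $\delta\to+0$, using that the second factor tends to $1$ and that $\limsup$ of a product with a convergent positive factor multiplies accordingly (and $\limsup(a+b)\leq\limsup a+\limsup b$), yields exactly $\Ubox(CD)\leq\Ubox C+\Ubox D$.

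The main obstacle I anticipate is purely bookkeeping: making sure the covering sets $U_j$ and $V_k$ can legitimately be taken inside $[-R,R]$ (one may intersect each with $[-R,R]$ without increasing diameters, since $C,D$ already lie in $[-R,R]$), and handling the limiting argument cleanly — in particular, justifying that replacing $\delta$ by $c\delta$ for a fixed constant $c=2R$ does not change the $\limsup$ defining the upper box dimension, which is a standard and harmless rescaling property of $-\log\delta$. One should also be mildly careful that $N(F,\delta)$ is finite for all $\delta>0$ when $F$ is bounded, so all logarithms are well defined; this is immediate from covering $[-R,R]$ by finitely many intervals of length $\delta$. No deeper idea is needed beyond the diameter-of-a-product estimate above.
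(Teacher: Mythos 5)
Your proposal is correct and follows essentially the same route as the paper's first proof: the bilinear estimate $|u_1v_1-u_2v_2|\leq |u_1||v_1-v_2|+|v_2||u_1-u_2|\leq 2R\eta$ shows that products of cover elements give a $\delta$-cover of $CD$ from $\delta/(2R)$-covers of $C$ and $D$, whence $N(CD,\delta)\leq N(C,\delta/(2R))\,N(D,\delta/(2R))$, and the rescaling $\delta\mapsto\delta/(2R)$ is absorbed in the $\limsup$ exactly as you describe. The paper's argument is the same, differing only in that it leaves the routine bookkeeping (nonemptiness, finiteness of covering numbers, the $-\log(2R)$ correction) implicit.
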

By induction and Lemma~\ref{main2}, we immediately obtain that
\begin{equation}\label{iterate}
    \Ubox (C_1\cdots C_n) \leq \Ubox C_1 +\cdots + \Ubox C_n,  
\end{equation}
for all bounded sets $C_1,\ldots, C_n\subset \mathbb{R}$.
\begin{proof}[Proof of Theorem~\ref{main1} assuming Lemma~\ref{main2}]
Let $1/\mathbb{N}=\{1/n\: :\: n\in\mathbb{N} \}$. From \cite[Example~3.5]{Falconer}, we find that 
$
	\Ubox (1/\mathbb{N})=1/2.
$
Here we need just one direction of this formula, that is
\begin{equation}\label{dim1/N}
\Ubox (1/\mathbb{N})\geq 1/2.
\end{equation}
We show this inequality. Fix any $0<\delta<1/2$ and let $k\geq 2$ be the integer such that 
\[
1/(k(k+1)) \leq \delta < 1/((k-1)k).
\]
If $U\subset \mathbb{R}$ with $d(U)\leq \delta$, then $U$ has at most one intersection with the set $\{1,1/2,\ldots, 1/k\}$ since if $1/s,\:1/t\in U$ holds for some $1\leq s<t\leq k$, then we have
\[
	d(U)\geq \frac{1}{s}-\frac{1}{t}\geq \frac{1}{st}\geq \frac{1}{k(k-1)} >\delta, 
\]
 which is a contradiction. Therefore if we take any $\delta$-cover $\{U_j\}_{j=1}^N$ of $1/\mathbb{N}$, then $N\geq k$, which implies that $N(1/\mathbb{N},\delta)\geq k \geq (k(k+1))^{1/2}/2\geq \delta^{-1/2}/2$. Therefore we have
 \[
	 \Ubox (1/\mathbb{N})= \limsup_{\delta\rightarrow+0}  \frac{\log N(1/\mathbb{N},\delta)}{-\log\delta}
	 \geq \limsup_{\delta\rightarrow+0}  \frac{\log(\delta^{-1/2}/2)}{-\log\delta}=1/2.
\]

Let $A(p)=\{1/p^k \: \colon\: k=0,1,2,\ldots\}$ for every prime number $p$. Then $\Ubox A(p)=0$ holds. In fact, for every $0<\delta<1/2$, it follows that
\[
A(p)\subseteq [-\delta/2,\delta/2] \cup \bigcup_{0\leq k\leq\frac{\log (2/\delta)}{\log p}}  [-\delta/2+1/p^k,\delta/2+1/p^k].  
\]
This yields that
$
 N\big(A(p),\delta\big) \leq 2+\frac{\log(2/\delta)}{\log p}.
$
Hence we have
\begin{equation}\label{dimA(p)}
0\leq \Ubox A(p) = \limsup_{\delta\rightarrow +0} \frac{\log N\big(A(p),\delta\big)}{-\log \delta}\leq \limsup_{\delta\rightarrow +0} \frac{\log\left( 2+\frac{\log(2/\delta)}{\log p}\right) }{\log (1/\delta)}=0.
\end{equation}
Assume that there are only finitely many prime numbers $p_1,\ldots,p_n$. Then we have $1/\mathbb{N}= A(p_1)\cdots A(p_n)$
since any natural number greater than 1 can be written as a product of prime numbers. Therefore we obtain
\[
 1/2\leq \Ubox 1/\mathbb{N}= \Ubox (A(p_1)\cdots A(p_n))\leq  \Ubox A(p_1)+\cdots +\Ubox A(p_n)=0
\]
by  (\ref{iterate}), (\ref{dim1/N}), and (\ref{dimA(p)}). This is a contradiction.
\end{proof}

\begin{proof}[Proof of Lemma~\ref{main2}]

Fix any $0<\delta<1/2$. Let $R$ be a sufficiently large number such that $C\subseteq [-R,R]$ and $D\subseteq [-R,R]$. Let $\{U_i\}_{i=1}^{N_C}$ and $\{V_j\}_{j=1}^{N_D}$ be $\delta/(2R)$-covers of $C$ and $D$, respectively, where we define $N_C=N(C,\delta/(2R))$ and $N_D=N(D,\delta/(2R))$. Then we find that $\{U_i V_j\}_{1\leq i \leq N_C, 1\leq j\leq N_D}$ is a $\delta$-cover of $CD$. Indeed, for fixed $1\leq i \leq N_C$, $1\leq j\leq N_D$, and for all $c_1,c_2\in U_i$ and $d_1,d_2\in V_j$ we have
\begin{equation}\label{Lipschitz}
 |c_1\cdot d_1-c_2\cdot d_2 |\leq |c_1||d_1-d_2|+|d_2||c_1-c_2|\leq R\cdot \delta/(2R) + R\cdot \delta/(2R)=\delta, 
\end{equation}
which means that the diameter of $U_i V_j$ is at most $\delta$. Furthermore, it is clear that 
\[
	C D\subseteq \bigcup_{1\leq i \leq N_C, 1\leq j\leq N_D} U_i V_j. 
\]
Therefore the following inequality holds:
\[
N(C D, \delta)\leq N_C\cdot N_D=N(C,\delta/(2R))\cdot N(D,\delta/(2R)),
\]
which yields that
\begin{align*}
\Ubox (C D) &= \limsup_{\delta\rightarrow +0}\frac{\log N(CD,\delta)}{-\log \delta} \\
&\leq \limsup_{\delta\rightarrow +0}\frac{\log N(C,\delta/(2R))}{-\log (\delta/2R)-\log(2R)}
+\limsup_{\delta\rightarrow +0}\frac{\log N(D,\delta/(2R))}{-\log (\delta/2R)-\log(2R)}\\
&=\Ubox C+\Ubox D.
\end{align*}
\end{proof}

The above proof does not require any specific knowledge on fractal geometry, but we can get a much simpler proof if we use properties of the upper box dimension.
\begin{proof}[Another proof of Lemma~\ref{main2}]
We define $\phi\: :\: C\times D \rightarrow  \mathbb{R}$ as $\phi(c,d)=cd$, where $C\times D=\{(c,d)\in\mathbb{R}^2\ \colon\ c\in C, d\in D \}$. We show that $\phi$ is Lipschitz-continuous. Let $R$ be a sufficiently large number such that $C\subseteq [-R,R]$ and $D\subseteq [-R,R]$. Then we have
\[
|\phi(c_1,d_1)-\phi(c_2,d_2)|= |c_1||d_1-d_2|+|d_2||c_1-c_2|\leq 2R\|(c_1,d_1)-(c_2,d_2)\|
\]
for all $c_1,c_2\in C$ and $d_1,d_2\in D$, where $\|\cdot\|$ denotes the Euclidean norm on $\mathbb{R}^2$. Thus $\phi$ is Lipschitz-continuous. Therefore we have 
\[
\Ubox (CD)=\Ubox \phi(C\times D) \leq \Ubox (C\times D) \leq \Ubox C +\Ubox D
\]
by \cite[Proposition~2.5 and Product formula~7.5]{Falconer}.

\end{proof}

\section*{Acknowledgement}
The author would like to thank my supervisor Professor Kohji Mastumoto, my previous supervisor Professor Neal Bez, and the referee for useful comments. The author is grateful to the seminar members for discussion. The author is financially supported by Yoshida Scholarship Foundation.

\end{document}